\numberwithin{equation}{section}
\theoremstyle{plain}
\newtheorem{Th}{Theorem}[section]
\newtheorem{Cor}[Th]{Corollary}
\newtheorem{Pro}[Th]{Proposition}
 \theoremstyle{definition}
\newtheorem{?}[Th]{Problem}
\tikzstyle{n}=[circle,draw,thick,scale=0.5]
\tikzstyle{nt}=[circle,draw,thick,fill,scale=0.5]
\tikzstyle{nc}=[circle,draw,thick,fill]
\begin{document}

\title{One more remark on the adjoint polynomial}
\thanks{The author was partially supported by the MTA R\'enyi Institute Lend\"ulet Limits of Structures Research Group.}

\author{Ferenc Bencs} 
 \address{Central European University, Department of Mathematics
 \\ H-1051 Budapest
 \\ Zrinyi u. 14. \\ Hungary \& Alfr\'ed R\'enyi Institute of Mathematics\\ H-1053 Budapest\\ Re\'altanoda u. 13-15.} 
 \email{ferenc.bencs@gmail.com}

 \subjclass[2000]{Primary: 05C31, Secondory: 05C69, 05C30}

 \keywords{independence polynomial, adjoint polynomial, sigma polynomial}

\begin{abstract}
 The adjoint polynomial of $G$ is \[h(G,x)=\sum_{k=1}^n(-1)^{n-k}a_k(G)x^k,\] where $a_k(G)$ denotes the number of ways one can cover all vertices of the graph $G$ by exactly $k$ disjoint cliques of $G$. In this paper we show the the adjoint polynomial of a graph $G$ is a simple transformation of the independence polynomial of another graph $\widehat{G}$. This enables us to use the rich theory of independence polynomials to study the adjoint polynomials. In particular we a give new proofs of several theorems of R. Liu and P. Csikv\'{a}ri.
\end{abstract}

\maketitle

\section{Introduction} 
Let $a_k(G)$ denote the number of ways one can cover all vertices of the graph $G$ by exactly $k$ disjoint cliques of $G$. From the definition it is clear that $a_n(G)=1$ and $a_{n-1}(G)=e(G)$, the number of edges, where the number of vertices of $G$ is $n$. Then the adjoint polynomial of $G$ is
\[
  h(G,x)=\sum_{k=1}^n(-1)^{n-k}a_k(G)x^k.
\]
The adjoint polynomial  was introduced by R. Liu \cite{Liu1997} and it is studied in a series of papers 
(\cite{Brenti1992, Brenti1994, Zhao2004, Zhao2005, Zhao2009}).
Let us remark that we are using the definition for the adjoint polynomial from \cite{Csikvari2012a}, but usually it is defined without alternating signs.  

The adjoint polynomial shows a strong connection with the chromatic polynomial \cite{Read1968}. More precisely the chromatic polynomial of the complement graph $\bar{G}$ of $G$ is 
\[
  ch(\bar{G},x)=\sum_{k=1}^na_k(G)x(x-1)\dots(x-k+1).
\]

The adjoint polynomial shows  certain  nice analytic properties. For instance it has a real zero whose modulus is the largest among all zeros. H. Zhao showed (\cite{Zhao2005}) that the adjoint polynomial always has a real zero, furthermore P. Csikv\'ari proved (\cite{Csikvari2012a}) that the largest real zero has the largest modulus among all zeros. He also showed that the absolute value of the largest real zero is at most $4(\Delta -1)$, where $\Delta$ is the largest degree of the graph $G$. 

Similar results hold for the independence polynomial. Recall that the independence polynomial of a graph $H$ is \[I(H,x)=\sum_{k\ge 0}(-1)^ki_k(H)x^k,\] where $i_k(H)$ denotes the number of independent sets  of size $k$ in $H$ (note that $i_0(H)=1$). It suggests that there might be a connection between the independence polynomials and the adjoint polynomials. 

In this paper we will show that there is indeed such a connection between the two graph polynomials. We will prove the following theorem:
\begin{Th}\label{th:main}
 For any graph $G$ there exists a graph $\widehat{G}$, such that 
 \[
  h(G,x)=x^{n}I(\widehat{G},1/x).
 \]
\end{Th}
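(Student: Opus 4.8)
The plan is to turn the identity into a counting statement about a set system, and then to recognise that set system as the independence complex of a graph. First I would expand the right-hand side. Since $I(\widehat G,x)=\sum_{k\ge 0}(-1)^k i_k(\widehat G)x^k$, we get
\[
 x^{n}I(\widehat G,1/x)=\sum_{k\ge 0}(-1)^k i_k(\widehat G)x^{n-k},
\]
and substituting $m=n-k$ turns $h(G,x)=\sum_{m}(-1)^{n-m}a_m(G)x^m$ into $\sum_k(-1)^k a_{n-k}(G)x^{n-k}$. The two expressions agree exactly when $i_k(\widehat G)=a_{n-k}(G)$ for every $k$ (the case $k=0$ being the harmless $i_0=1=a_n$). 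So it suffices to build a graph $\widehat G$ in which the independent sets of size $k$ are counted by the clique partitions of $G$ into exactly $n-k$ cliques.

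Second, I would encode clique partitions by edge sets using a fixed linear order on $V(G)$. To a partition of $V(G)$ into cliques $C_1,\dots,C_{n-k}$ I associate the edge set $F=\bigcup_j\{\,\{\min C_j,v\}:v\in C_j\setminus\{\min C_j\}\,\}$, the union of the stars centred at the least vertex of each clique. Because $\sum_j(|C_j|-1)=n-(n-k)=k$, this $F$ has exactly $k$ edges, and the partition is recovered as the vertex sets of the connected components of $(V(G),F)$ together with the isolated vertices. Call such an $F$ \emph{canonical}; then canonical sets of size $k$ biject with clique partitions into $n-k$ cliques. Explicitly, $F\subseteq E(G)$ is canonical iff every component of $(V(G),F)$ is a star whose centre is the least vertex of the component and whose vertex set induces a clique of $G$. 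The linear order is what makes this assignment single-valued, removing the choice of spanning structure inside each clique.

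Third — and here is the crux — I would define $\widehat G$ on vertex set $E(G)$, declaring two edges adjacent precisely when they cannot coexist in a canonical set: concretely $e=\{x,y\}$ and $f=\{x,z\}$ sharing the vertex $x$ are \emph{non-adjacent} iff $\{x,y,z\}$ spans a triangle of $G$ and $x=\min\{x,y,z\}$, while vertex-disjoint edges are always non-adjacent. By definition the independent sets of $\widehat G$ are exactly the edge sets all of whose pairs are non-adjacent, so the theorem reduces to the lemma: an edge set is canonical if and only if each of its two-element subsets is. The forward implication is just heredity. The hard direction is the converse, and it is the main obstacle. I would argue that pairwise non-adjacency rules out a path of length three (the pair $\{x_0,x_1\},\{x_1,x_2\}$ forces $x_1<x_2$ while $\{x_1,x_2\},\{x_2,x_3\}$ forces $x_2<x_1$) and rules out a cycle (which would require every vertex to be smaller than both of its neighbours), so each component is a star; the minimum-centre condition pins down the centre, and applying the triangle condition to each pair of leaves shows the component's vertex set is a clique. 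Granting this local-to-global lemma, $i_k(\widehat G)$ counts canonical edge sets of size $k$, hence equals $a_{n-k}(G)$, and the displayed identity follows.
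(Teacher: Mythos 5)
Your proposal is correct and follows essentially the same route as the paper: your graph $\widehat{G}$ on vertex set $E(G)$ is exactly the paper's construction (you center the stars at clique minima where the paper uses maxima, i.e., the same graph for the reversed vertex ordering), and your bijection between canonical edge sets and clique partitions is precisely the paper's map $\phi$. The only organizational difference is that you isolate the hard direction as an explicit local-to-global lemma, ruling out paths of length three and cycles, whereas the paper proves the same fact by directly constructing the inverse of $\phi$ from an independent set; the combinatorial content is identical.
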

This correspondence will enable us to  use the rich theory of independence polynomials to study the adjoint polynomials. In particular, we give new proofs of the aforementioned results R. Liu and P. Csikv\'ari. For details see Section ~\ref{sec:m}.

In Section ~\ref{sec:cons} we will give the construction of $\widehat{G}$ and prove Theorem~\ref{th:main}, moreover, we will show that $\widehat{G}$ can be taken as a spanning subgraph of the line graph of $G$. Recall that the line graph $L(G)$ for a graph $G$ is a graph on the edge set of $G$, and there is an edge between two vertices of the line graph if they share a common vertex. This enables us to establish a connection with the matching polynomial of the graph $G$. For definition of the matching polynomial and applications of this connection see Section ~\ref{sec:m}.

\subsection{Notation} Denote the vertex set and edge set of a graph $G$ by $V(G)$ and $E(G)$, respectively.
Let $N_G(u)$ denote the set of neighbors of the vertex $u$ and $d(u)$ the degree of the vertex $u$.
Let $N_G[u]=N_G(u)\cup\{u\}$ denote the closed neighborhood of the vertex $u$. If it is clear from the context, then we will write $N(u)$ and $N[u]$ instead of $N_G(u)$ and $N_G[u]$.
Let $G-v$ denote the graph obtained from $G$ by deleting the vertex $v$.
If $S\subseteq V(G)$, then $G[S]$ denotes the induced subgraph of $G$ on the vertex set $S$, and $G-S$ denotes $G[V(G)-S]$.

\section{The construction}\label{sec:cons}
In this section we will construct $\widehat{G}$ and prove Theorem~\ref{th:main}.

\medskip
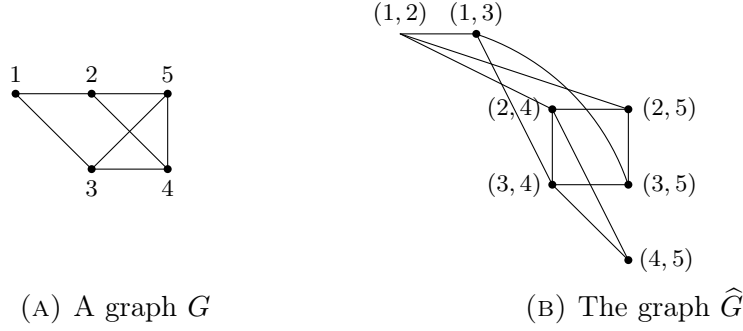
\begin{figure}[h]
      \begin{subfigure}[b]{0.5\linewidth}
	\centering
	\begin{tikzpicture}[line cap=round,line join=round,>=triangle 45,x=1.0cm,y=1.0cm]
	\clip(0.68,0.58) rectangle (3.8,2.68);
	\draw (1,2)-- (2,2);
	\draw (2,2)-- (3,2);
	\draw (1,2)-- (2,1);
	\draw (2,1)-- (3,2);
	\draw (3,2)-- (3,1);
	\draw (2,1)-- (3,1);
	\draw (3,1)-- (2,2);
	\begin{scriptsize}
	\fill  (1,2) circle (1.5pt);
	\draw (1,2.26) node {$1$};
	\fill  (2,2) circle (1.5pt);
	\draw (2,2.26) node {$2$};
	\fill  (2,1) circle (1.5pt);
	\draw (2,0.75) node {$3$};
	\fill  (3,1) circle (1.5pt);
	\draw (3,0.75) node {$4$};
	\fill  (3,2) circle (1.5pt);
	\draw (3,2.26) node {$5$};
	\end{scriptsize}
	\end{tikzpicture}
	\vspace{1cm}
	\caption{A graph $G$}
      \end{subfigure}%
      \begin{subfigure}[b]{0.5\linewidth}
	\begin{tikzpicture}[line cap=round,line join=round,>=triangle 45,x=1.0cm,y=1.0cm]
	  \clip(0.66,0.84) rectangle (5.18,4.6);
	  \draw (1,4)-- (2,4);
	  \draw (1,4)-- (3,3);
	  \draw (1,4)-- (4,3);
	  \draw (2,4)-- (3,2);
	  \draw (3,3)-- (3,2);
	  \draw (3,2)-- (4,1);
	  \draw (3,3)-- (4,1);
	  \draw (3,3)-- (4,3);
	  \draw (3,2)-- (4,2);
	  \draw (4,2)-- (4,3);
	  \draw [shift={(1,1)}] plot[domain=0.32:1.25,variable=\t]({1*3.16*cos(\t r)+0*3.16*sin(\t r)},{0*3.16*cos(\t r)+1*3.16*sin(\t r)});
	  \begin{scriptsize}
	  \draw (1,4.26) node {$(1,2)$};
	  \fill  (2,4) circle (1.5pt);
	  \draw (2,4.26) node {$(1,3)$};
	  \fill  (3,3) circle (1.5pt);
	  \draw (2.5,3) node {$(2,4)$};
	  \fill  (3,2) circle (1.5pt);
	  \draw (2.5,2) node {$(3,4)$};
	  \fill  (4,1) circle (1.5pt);
	  \draw (4.5,1) node {$(4,5)$};
	  \fill  (4,2) circle (1.5pt);
	  \draw (4.54,2) node {$(3,5)$};
	  \fill  (4,3) circle (1.5pt);
	  \draw (4.54,3) node {$(2,5)$};
	  \end{scriptsize}
	  \end{tikzpicture}
	  \caption{The graph $\widehat{G}$}
      \end{subfigure}
\caption{An example for the construction}
\label{fig:cons}
\end{figure}

Let $\mathcal{K}(G)$ denote the set of clique covers of $G$, that is 
\begin{align*}
  \{\{S_1,\dots,S_k\}\subseteq \mathcal{P}(V(G))~|&~\cup_{i=1}^kS_i=V(G),~\textrm{if $1\le i\neq j\le n$, then }\\& S_i\cap S_j=\emptyset, ~G[S_i] \textrm{ is a complete graph}\},
\end{align*}
then one can write the adjoint polynomial of $G$ in the following way:
\[
  h(G,x)=\sum_{Q\in \mathcal{K}(G)}(-1)^{n-|Q|}x^{|Q|}.
\]
We will also use the notation  
\[
  h^*(G,x)=x^nh\left(G,1/x\right)=\sum_{k=0}^{n-1}(-1)^ka_{n-k}(G)x^k.
\]
Let us choose an arbitrary ordering on the vertices of $G$, that is $V(G)=\{u_1,\dots,u_n\}$. Then we construct a $\widehat{G}$ graph as follows. Let $V(\widehat{G})=\{(u_i,u_j)\in E(G)~|~1\le i<j\le n\}$, and let $(u_i,u_j)\neq (u_k,u_l)\in V(\widehat{G})$ be two vertices. We may assume that $j\le l$, then the two vertices are connected, if and only if ($i=k$) or ($j=k$) or ($j=l$ and $(u_i,u_k)\notin E(G)$). Clearly $\widehat{G}$ is a subgraph of the line graph of $G$. In the next theorem we show that  the independence polynomial of $\widehat{G}$ actually equals to $h^*(G,x)$. For example see Figure ~\ref{fig:cons}.

\begin{Pro}\label{Th:main}
 Let $G$ be a graph and let us choose an ordering of the vertices. Then the constructed $\widehat{G}$ graph satisfies that
 \[
  h^*(G,x)=I(\widehat{G},x).
 \]
 Moreover if $e=(u_{n-1},u_n)\in E(G)$, then $\widehat{G-e}\subseteq \widehat{G}$.
\end{Pro}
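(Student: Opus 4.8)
The plan is to establish the identity $h^*(G,x)=I(\widehat{G},x)$ by exhibiting an explicit bijection between the clique covers of $G$ and the independent sets of $\widehat{G}$, in such a way that a clique cover $Q$ with $|Q|=k$ corresponds to an independent set of size $n-k$ in $\widehat{G}$. Once such a bijection is in hand, the two sums
\[
 h^*(G,x)=\sum_{k=0}^{n-1}(-1)^k a_{n-k}(G)x^k \qquad\text{and}\qquad I(\widehat{G},x)=\sum_{j\ge 0}(-1)^j i_j(\widehat{G})x^j
\]
match term by term, since the vertices of $\widehat{G}$ are exactly the edges of $G$ and an independent set of size $j$ will correspond to a cover by $n-j$ cliques.

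First I would describe the correspondence concretely. Given a clique cover $Q=\{S_1,\dots,S_k\}$ of $G$, each clique $S_i$ spans $G[S_i]$, a complete graph, and I want to extract from it a canonical set of edges of $G$ that together form an independent set in $\widehat{G}$. The natural choice is, within each clique $S_i$, to build a \emph{spanning structure} using the vertex ordering: order the vertices of $S_i$ and take the edges connecting each vertex to the smallest-indexed vertex of $S_i$ (a ``star'' rooted at the minimum), or alternatively a Hamiltonian-path-type selection; in either case a clique on $|S_i|$ vertices contributes exactly $|S_i|-1$ edges, so the total number of selected edges is $\sum_i(|S_i|-1)=n-k$, matching the target size. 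The crux is then to verify that the set of all selected edges, taken over all cliques in $Q$, is genuinely independent in $\widehat{G}$, and conversely that every independent set of $\widehat{G}$ arises this way from a unique clique cover.

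The verification splits along the three adjacency conditions in the definition of $\widehat{G}$. Two selected edges coming from \emph{different} cliques $S_i\neq S_j$ share no vertex (the cliques are disjoint), so conditions $i=k$ and $j=k$ cannot fire; I must check that the third condition, ``$j=l$ and $(u_i,u_k)\notin E(G)$'', also never fires for cross-clique pairs, and that within a single clique the chosen edges are pairwise non-adjacent in $\widehat{G}$ precisely because they all live in a complete graph (so the offending non-edge $(u_i,u_k)\notin E(G)$ is impossible inside a clique). For the converse, starting from an independent set $T$ in $\widehat{G}$, I would reconstruct the cliques by taking connected components of the subgraph of $G$ formed by $T$ and argue, using the third adjacency condition, that each such component is in fact a clique of $G$ whose chosen-edge structure is exactly $T$ restricted to it; this is where the precise form of the ``$(u_i,u_k)\notin E(G)$'' clause is designed to force the reconstructed components to be complete.

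The hard part, and the step I expect to demand the most care, is exactly this reconstruction direction: showing that the independence of $T$ forces the $G$-edges of $T$ to decompose into vertex-disjoint complete graphs, so that no ``extra'' edges are needed to complete a clique and no two components secretly belong in the same clique. The third adjacency rule must be shown to be exactly strong enough to rule out a non-edge within a component (guaranteeing completeness) while being weak enough to permit all the intra-clique selected edges to coexist independently; tracking the role of the ordering and the $j\le l$ normalization through this argument is the delicate bookkeeping. Finally, for the ``moreover'' claim, I would observe that deleting the edge $e=(u_{n-1},u_n)$ simply removes the corresponding vertex from $V(\widehat{G})$, and I would check that the induced adjacency relations on the remaining vertices agree with those prescribed by the construction applied to $G-e$ with the inherited ordering; since $e$ involves the two largest vertices, its removal cannot alter any of the three adjacency conditions among the surviving edges, giving $\widehat{G-e}\subseteq\widehat{G}$ as an induced (hence spanning-on-$V(\widehat{G-e})$) subgraph.
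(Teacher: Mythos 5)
Your overall strategy is the paper's: build a bijection $\phi$ from clique covers to independent sets of $\widehat{G}$ with $|\phi(Q)|=n-|Q|$, and read off the polynomial identity term by term. But the step you yourself call the crux fails for both of the canonical selections you propose. Inside a clique $S$ of the cover, you choose either the star rooted at the \emph{smallest}-indexed vertex of $S$, or a Hamiltonian-path-type selection. Neither is an independent set of $\widehat{G}$ as soon as $|S|\ge 3$. For the min-rooted star, two chosen edges $(u_m,u_a)$ and $(u_m,u_b)$ with $m<a<b$ share their \emph{smaller} endpoint, so the first adjacency condition ($i=k$) fires and they are adjacent in $\widehat{G}$ unconditionally: the clique hypothesis never enters, because the escape clause ``$(u_i,u_k)\notin E(G)$'' appears only in the third condition, which governs edges sharing their \emph{larger} endpoint. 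For a path $v_1<v_2<\dots<v_s$ through $S$, the consecutive edges $(v_1,v_2)$ and $(v_2,v_3)$ satisfy the second condition ($j=k$), so they too are adjacent. Your own justification --- ``the offending non-edge $(u_i,u_k)\notin E(G)$ is impossible inside a clique'' --- is valid exactly when the shared vertex is the larger endpoint of both edges, i.e.\ exactly for the star rooted at the \emph{maximum}-indexed vertex of each clique. That is the paper's map: $\phi(Q)=\bigcup_i\{(u_j,u_{f(i)})\mid u_j\in S_i,\ j\neq f(i)\}$ with $f(i)$ the maximal index in $S_i$; with this choice two star edges can only be adjacent through the third condition, which is blocked because their smaller endpoints are adjacent in the clique.

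The rest of your sketch is sound once the forward map is repaired, and is in places easier than you anticipate. Cross-clique independence is immediate, not something needing a separate argument: all three adjacency conditions of $\widehat{G}$ require the two edges to share a vertex, and edges from disjoint cliques share none. For the reconstruction, independence of $T$ forces any two edges of $T$ meeting at a vertex to meet at their common larger endpoint (conditions one and two exclude every other intersection pattern), so the components of $T$ are automatically max-rooted stars, and the negation of the third condition makes each star's vertex set a clique; this is the paper's grouping $K_i=\{(u_j,u_i)\in I\mid j<i\}$, and it also yields injectivity of $\phi$, since $Q$ is recoverable from $\phi(Q)$ as star components plus singletons. Finally, your reason for $\widehat{G-e}\subseteq\widehat{G}$ is right in spirit but needs one explicit check: deleting an edge of $G$ can in principle \emph{create} adjacencies in the hatted graph via the third condition (the pair $(u_i,u_k)$ might lie in $E(G)$ but not in $E(G-e)$), which would ruin containment. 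This cannot happen here because $(u_i,u_k)=(u_{n-1},u_n)$ would force one of $i,k$ to equal $n$, contradicting $i,k<l\le n$; that one line is precisely where the hypothesis that $e$ joins the two last vertices of the ordering is used.
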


\begin{proof}
  The second statement is clear from the construction above. In order to prove the first statement we will show that there is a bijection between independent sets of $\widehat{G}$ and $\mathcal{K}=\mathcal{K}(G)$. More precisely, if we let $\mathcal{I}=\mathcal{I}(\widehat{G})$ denote the set of independent sets of $\widehat{G}$, then there exists a  bijection $\phi:\mathcal{K}\to\mathcal{I}$ such that for any $Q\in\mathcal{K}$ we have $|\phi(Q)|=n-|Q|$.
  
  Let $Q=\{S_1,\dots,S_k\}\in\mathcal{K}$. For $1\le i\le k$ let $f(i)$ denote the maximal index in $S_i$, that is $f(i)=\max\{1\le j\le n ~|~ u_j\in S_i\}$. Then $\phi(Q)$ will be the union of clique edges having one endpoint as a maximally indexed vertex, that is
  \[
    \phi(Q)=\bigcup_{1\le i\le k}\{(u_j,u_{f(i)})~|~u_j\in S_i, ~j\neq f(i)\}\subseteq V(\widehat{G}).
  \]
  
  First we show that $\phi(Q)$ is an independent set in $\widehat{G}$. Let us define the sets $F_i=\{(u_j,u_{f(i)})~|~u_j\in S_i, ~j\neq f(i)\}$ for $1\le i\le k$. If $1\le i<i'\le k$, then there is no edge between $F_i$ and $F_{i'}$ in $\widehat{G}$, since $Q$ is a partition of $V(G)$. Also the set $F_i$ is independent for  $1\le i\le k$, since if $(u_j,u_{f(i)})\neq(u_{j'},u_{f(i)})\in F_i\subseteq E(G[S_i])$, then $(u_j,u_{j'})\in E(G[S_i])$, because $S_i$ is a clique. 
  
  Furthermore we see that $|F_i|=|S_i|-1$, so 
  \[
    |\phi(Q)|=\sum_{1\le i\le k}|F_i|=\sum_{1\le i\le k}(|S_i|-1)=n-|Q|
  \]
  
  For the surjectivity of $\phi$ let $I\in \mathcal{I}$ be fixed, and let $K_i=\{(u_j,u_i)\in I~|~j<i\}$ for $1\le i\le n$. Then 
  \[
    Q'=\left\{ \{u_i\}\cup\{u_j~|~(u_j,u_i)\in K_i\}~|~K_i\neq\emptyset \right\}
  \]
  is a set of pairwise disjoint subsets of $V(G)$, where each subset induces a clique in $G$. Therefore
  \[
   Q=Q'\cup \left\{\{u_i\}~|~u_i\notin \cup Q'\right\}
  \]
  is a partition of $V(G)$ where each part induces a clique in $G$. Moreover we have that
  \[
    \phi(Q)=\bigcup_{K_i\neq \emptyset}\{(u_j,u_i)\in I~|~j<i\}=I.
  \] 
  So $\phi$ is a bijection.

  Now the statement follows as:
  \begin{eqnarray*}
   h^*(G,x)=\sum_{Q\in \mathcal{K}}(-x)^{n-|Q|}=\sum_{Q\in\mathcal{K}}(-x)^{|\phi(Q)|}=\sum_{I\in\mathcal{I}}(-x)^{|I|}=I(\widehat{G},x).
  \end{eqnarray*}  
  
\end{proof}

\section{About ``the largest'' root of the adjoint polynomial}\label{sec:m}

In this section we will give a various applications of Theorem~\ref{th:main} and the construction. 
First we collect some results on independence polynomials of graphs. 


\begin{Th}\cite{Fisher1990,Goldwurm2000}\label{th:existbeta}
 Let $G$ be a connected graph. Then  $I(G,x)$ has a zero in the interval $(0,1]$, and  let $\beta(G)$ be the smallest  among them. Then $\beta(G)$ is a simple zero of $I(G,x)$, and if $\xi\neq\beta(G)$ is a zero of $I(G,x)$, then $\beta(G)<|\xi|$.
\end{Th}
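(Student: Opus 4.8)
The plan is to study $I(G,x)$ through the power series of its reciprocal. Writing $I(G,x)=\sum_{S}(-x)^{|S|}$, the sum running over all independent sets $S$ of $G$ (including $S=\emptyset$), I first observe that this is exactly the trivial-heap generating function of the heap monoid (equivalently, the Cartier--Foata trace monoid) built on the vertices of $G$, with two vertices declared to commute precisely when they are non-adjacent in $G$: under this rule the pairwise-commuting sets of generators are exactly the independent sets of $G$. The Cartier--Foata inversion formula then yields
\[
 \frac{1}{I(G,x)}=\sum_{n\ge 0}t_n(G)\,x^{n},
\]
where $t_n(G)$ counts the heaps of $n$ pieces; in particular every $t_n(G)$ is a nonnegative integer, with $t_0(G)=1$ and $t_1(G)=|V(G)|$. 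Consequently the radius of convergence $\rho$ of this series equals the smallest modulus of a zero of $I(G,x)$, and $\rho>0$ because $I(G,0)=1$.

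Granting this nonnegativity, the existence and location of $\beta(G)$ follow from Pringsheim's theorem: a power series with nonnegative coefficients and finite radius $\rho$ has a singularity at the positive real point $x=\rho$, and since the only singularities of $1/I(G,x)$ are the zeros of $I(G,x)$, the point $\beta:=\rho$ is a real positive zero of $I(G,x)$ of smallest modulus. To see that $\beta\le 1$ I compare the moduli of all zeros: their product equals the ratio of the absolute values of the constant and leading coefficients of $I(G,x)$, namely $1/i_{\alpha}(G)\le 1$ where $\alpha=\deg I(G,x)$; hence not all zeros can have modulus exceeding $1$, so $\beta\le 1$. This gives the first two assertions, and it also shows $I(G,x)>0$ on $[0,\beta)$, since there $1/I(G,x)$ is a convergent series of nonnegative terms with constant term $1$.

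It remains to prove that $\beta$ is simple and strictly dominated in modulus by every other zero, and this is where connectedness of $G$ is essential (for two isolated vertices $I=(1-x)^2$ has a double zero at $1$). I would argue this by the Perron--Frobenius mechanism. Strict domination, i.e.\ uniqueness of the zero on the circle $|x|=\beta$, follows from aperiodicity of the support $\{n:t_n(G)>0\}$: since $t_0(G)=1$ and $t_1(G)=|V(G)|\ge 1$ this support has greatest common divisor $1$, and for a rational series with nonnegative coefficients an aperiodic support forces a single singularity on the circle of convergence. For simplicity I would realise $1/I(G,x)$ through a nonnegative transfer matrix $T$ coming from the heap structure, with spectral radius $1/\beta$, and show that connectedness of $G$ makes $T$ irreducible and aperiodic (primitive); Perron--Frobenius then makes $1/\beta$ a simple, strictly dominant eigenvalue, whence $\beta$ is a simple zero of $I(G,x)$ lying strictly inside every other zero. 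The main obstacle is precisely this last step: producing a transfer matrix whose characteristic polynomial is controlled by $I(G,x)$ and translating ``$G$ connected'' into primitivity of $T$. A more combinatorial alternative would prove simplicity directly, by induction on $|V(G)|$ using the deletion identity $I(G,x)=I(G-v,x)-x\,I(G-N[v],x)$ together with the monotonicity $\beta(G)\le\beta(G-v)$, which isolates the contribution of connectedness to the non-vanishing of $I'(G,\beta)$.
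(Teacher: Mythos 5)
The paper itself offers no proof of this theorem; it is quoted from the literature (\cite{Fisher1990,Goldwurm2000}), so your attempt can only be measured against those sources and on its own merits. The first half of your argument is correct, and in fact close in spirit to the approach of Goldwurm and Saerens, who also work with trace monoids: the Cartier--Foata inversion $1/I(G,x)=\sum_{n\ge 0}t_n(G)x^n$ with nonnegative integer coefficients, Pringsheim's theorem locating a singularity at the radius of convergence, and the Vieta-type bound (product of the moduli of the zeros equals $1/i_\alpha(G)\le 1$) together do prove that $I(G,x)$ has a zero $\beta(G)\in(0,1]$ of smallest modulus among all zeros. Note that none of this uses connectedness, and correctly so, since that part of the statement holds for every graph.

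The gap is in the second half, which is the actual substance of the theorem. The principle you invoke --- ``for a rational series with nonnegative coefficients an aperiodic support forces a single singularity on the circle of convergence'' --- is false. A counterexample is $f(x)=(2+x)/(1-x^2)$, whose coefficients are $2,1,2,1,\dots$, all positive (so the support is all of $\mathbb{N}$ and has gcd $1$), yet both $x=1$ and $x=-1$ are poles of minimal modulus; the sequence $2,1,2,1,\dots$ is even $\mathbb{N}$-rational, so restricting the principle to transfer-matrix-representable series does not rescue it. The fact that your aperiodicity argument nowhere uses connectedness of $G$ should itself have been a warning, since connectedness is exactly what the hard part of the theorem requires. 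For simplicity of $\beta(G)$ you explicitly leave the key step --- building a transfer matrix for $1/I(G,x)$ and showing that connectedness of $G$ forces its primitivity --- as an unresolved obstacle, and the closing one-sentence alternative via the identity $I(G,x)=I(G-v,x)-xI(G-N[v],x)$ is a direction rather than an argument; in particular it would lean on the strict monotonicity of $\beta$ under passing to subgraphs, which is itself a nontrivial cited result in this paper (Theorem~\ref{th:betamonoton}). So what you have established is existence and minimality of $\beta(G)\in(0,1]$; simplicity and strict modulus-domination remain unproven.
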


\begin{Th}\cite{Csikvari2012}\label{th:betamonoton}
 Let $G$ be a connected graph and $H$ be a proper subgraph of  $G$. 
Then $\beta(H)>\beta(G)$.
\end{Th}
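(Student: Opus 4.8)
The plan is to prove the statement by induction on $n=|V(G)|$, reducing the general proper subgraph to two elementary moves -- deletion of a single vertex and deletion of a single edge -- and then recombining them. Throughout I use the two standard recursions for the independence polynomial: the vertex recursion
\[
  I(G,x)=I(G-v,x)-xI(G-N[v],x),
\]
and, for an edge $e=uv$, writing $G^{uv}=G-(N[u]\cup N[v])$, the edge recursion
\[
  I(G-e,x)=I(G,x)+x^{2}I(G^{uv},x),
\]
both of which follow by splitting independent sets according to whether they contain $v$ (respectively both $u$ and $v$). For a disconnected graph I read $\beta$ as the minimum of $\beta$ over its connected components, so that by Theorem~\ref{th:existbeta} every graph satisfies $I(G,x)>0$ on $[0,\beta(G))$ and $I(G,\beta(G))=0$; it therefore suffices to prove $I(H,x)>0$ on the whole interval $(0,\beta(G)]$, since this forces $\beta(H)>\beta(G)$.

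First I would treat single-vertex deletion: for connected $G$ and any $v$, $I(G-v,x)>0$ on $(0,\beta(G)]$. I argue by contradiction. If $\gamma=\beta(G-v)\le\beta(G)$, then evaluating the vertex recursion at $\gamma$ gives $I(G,\gamma)=-\gamma\,I(G-N[v],\gamma)$. Since $G$ is connected, every component $C_j$ of $G-v$ contains a neighbour of $v$, so $C_j-(N(v)\cap C_j)$ is a \emph{proper} subgraph of the strictly smaller connected graph $C_j$; the inductive hypothesis then yields $\beta(C_j-(N(v)\cap C_j))>\beta(C_j)\ge\gamma$, whence $I(G-N[v],\gamma)>0$ as a product of positive factors. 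This makes $I(G,\gamma)<0$, contradicting $I(G,\gamma)\ge0$ for $\gamma\in(0,\beta(G)]$. Decomposing $G-w$ into components in the same spirit (using the inequality $\beta(G-w)>\beta(G)$ just obtained together with the inductive hypothesis on each component) upgrades this at once to the case where $H$ omits at least one vertex $w$, and in particular gives $I(G-S,x)>0$ on $(0,\beta(G)]$ for every nonempty proper vertex subset $S$.

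With vertex-subset deletion in hand, single-edge deletion is immediate from the edge recursion: $G^{uv}=G-(N[u]\cup N[v])$ is obtained from $G$ by deleting a nonempty vertex set, so $I(G^{uv},x)>0$ on $(0,\beta(G)]$, and then $I(G-e,x)=I(G,x)+x^{2}I(G^{uv},x)>0$ on the same interval, the first term being nonnegative and the second strictly positive.

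The remaining, and genuinely delicate, case is a \emph{spanning} proper subgraph $H$, with $E(H)\subsetneq E(G)$ but $V(H)=V(G)$. The obstacle is that if $e\in E(G)\setminus E(H)$ is not a bridge then $G-e$ is again connected on $n$ vertices, so a naive induction on the number of vertices is circular. I would resolve this by an inner induction on the size of $F=E(G)\setminus E(H)$. When $|F|=1$ this is exactly single-edge deletion. For $|F|\ge2$ pick $e\in F$ and set $G'=G-e$; if $G'$ is disconnected then $e$ is a bridge, both components have fewer than $n$ vertices, and the outer hypothesis applied component-by-component gives $\beta(H)\ge\beta(G')>\beta(G)$. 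If instead $G'$ is connected, the inner hypothesis applied to the connected $n$-vertex graph $G'$ and its spanning subgraph $H$ (now with $|F|-1$ missing edges) gives $\beta(H)>\beta(G')$, while single-edge deletion gives $\beta(G')=\beta(G-e)>\beta(G)$; chaining these yields $\beta(H)>\beta(G)$. I expect this non-bridge spanning case to be the main hurdle, precisely because it is the only configuration that does not reduce the vertex count, and the nested induction on $|F|$ is what makes the recursion well-founded.
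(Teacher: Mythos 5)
The paper never proves this statement: it is quoted as Theorem~\ref{th:betamonoton} from \cite{Csikvari2012} and used as a black box, so there is no internal proof to compare against; your proposal has to stand on its own, and it does. The nested induction (outer on $|V(G)|$, inner on $|E(G)\setminus E(H)|$ for the spanning case) is well-founded, both recursions carry the correct signs under the paper's alternating-sign convention (substituting $-x$ into the usual positive-coefficient recursions), and the key contradiction step is sound: at $\gamma=\beta(G-v)\le\beta(G)$ the vertex recursion gives $I(G,\gamma)=-\gamma I(G-N[v],\gamma)$, connectivity of $G$ forces every component $C_j$ of $G-v$ to meet $N(v)$, and the outer hypothesis applied to the strictly smaller connected $C_j$ makes each factor of $I(G-N[v],\gamma)$ positive, contradicting $I(G,\gamma)\ge 0$. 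Two features of your write-up deserve emphasis because they are exactly what makes the induction close: you strengthen the statement to allow disconnected subgraphs $H$ (with $\beta$ read as the minimum over components, equivalently the smallest positive root), which is indispensable since $G-N[v]$ and the pieces of $H$ inside components are generally disconnected; and you isolate the non-bridge spanning case, which a naive vertex-count induction cannot reach, handling it by the inner induction chained through $\beta(H)>\beta(G-e)>\beta(G)$. Only cosmetic gaps remain and you should state them explicitly: the outer base case $n=1$ is trivial; and the sets you delete may exhaust the vertex set (e.g.\ $N[u]\cup N[v]=V(G)$, or $C_j\subseteq N(v)$), in which case the relevant graph is empty and its independence polynomial is the constant $1$, so positivity holds trivially even though your phrase ``nonempty proper vertex subset'' does not literally cover it. None of these affects correctness.
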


\begin{Th}\cite{Csikvari2012}\label{th:betamoment}
 Let $G$ be a connected graph and $H$ be an induced subgraph, then in the following series
  \[
    \frac{I(H,x)}{I(G,x)}=\sum_{k\ge 0} r_{k}(G,H) x^k,
  \]
  for each $k\ge 0$ the coefficients $r_k(G,H)$ are positive integers.  
\end{Th}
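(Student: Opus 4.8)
The plan is to prove the statement by induction on $|V(G)|$, using only the deletion recursion for the independence polynomial and carrying two invariants through the induction at once. Connectivity will play essentially no role beyond excluding the degenerate reading $H=G$ (for which the ratio is identically $1$, so $r_0=1$ and $r_k=0$ for $k\ge 1$); I therefore treat $H$ as a \emph{proper} induced subgraph and will in fact establish the sharper conclusion $r_k(G,H)\ge 1$ for every $k\ge 0$. The two invariants to propagate are: (i) for \emph{any} graph and \emph{any} induced subgraph the quotient lies in $\mathbb{Z}_{\ge 0}[[x]]$ and has constant term $1$; and (ii) when the induced subgraph is proper, every coefficient is $\ge 1$.

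The engine is the standard identity (in the sign convention $I(G,x)=\sum_{S}(-x)^{|S|}$ of the excerpt)
\[
 I(G,x)=I(G-v,x)-x\,I(G-N[v],x),
\]
obtained by splitting independent sets according to whether they contain $v$. Dividing by $I(G-v,x)$ and inverting gives
\[
 \frac{I(G-v,x)}{I(G,x)}=\frac{1}{1-x\,W(x)},\qquad W(x)=\frac{I(G-N[v],x)}{I(G-v,x)}.
\]
The crucial observation is that $G-N[v]$ is itself an induced subgraph of $G-v$, and $G-v$ has fewer vertices than $G$, so by the induction hypothesis $W$ satisfies (i): it has nonnegative integer coefficients and $W(0)=1$. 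Expanding $\frac{1}{1-xW}=\sum_{t\ge 0}(xW)^t$, the summand $t=k$ already contributes $[x^k](xW)^k=[x^0]W^k=1$ to the coefficient of $x^k$, while every other summand contributes nonnegatively; hence $I(G-v,x)/I(G,x)$ has all coefficients $\ge 1$.

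To finish the inductive step for a general proper induced $H$, I would pick any $v\in V(G)\setminus V(H)$ and split
\[
 \frac{I(H,x)}{I(G,x)}=\frac{I(H,x)}{I(G-v,x)}\cdot\frac{I(G-v,x)}{I(G,x)}.
\]
Here $H$ is induced in the smaller graph $G-v$, so by invariant (i) the first factor $\sum_i p_i x^i$ has $p_0=1$ and $p_i\ge 0$; the second factor $\sum_j q_j x^j$ has every $q_j\ge 1$ by the previous paragraph. Then $[x^k]$ of the product equals $\sum_{i=0}^{k}p_i q_{k-i}\ge p_0 q_k=q_k\ge 1$, since all terms are nonnegative and the $i=0$ term is already $\ge 1$; integrality is automatic as both factors have integer coefficients and constant term $1$. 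This reestablishes (i) and (ii) for $(G,H)$ and closes the induction, giving $r_k(G,H)\ge 1$ for all $k$.

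The main obstacle is organizational rather than computational: one must set up the induction so that the strict bound (ii) and the weaker always-true fact (i) feed each other without circularity — the geometric expansion only needs the \emph{nonnegativity} of $W$, yet it outputs the strict bound $\ge 1$, and the boundary case $H=G$ (ratio $1$) is exactly what lets $W$ be handled even when $v$ is isolated or $G-N[v]=G-v$. I would also record the tempting analytic alternative and why I avoid it: Theorem~\ref{th:existbeta} and Theorem~\ref{th:betamonoton} show that $\beta(G)$ is a simple dominant pole of $I(H,x)/I(G,x)$ on the positive real axis, uncancelled since $\beta(H)>\beta(G)$, so singularity analysis forces $r_k>0$ for all \emph{large} $k$; but promoting this to positivity for \emph{every} $k$ is delicate, which is precisely why the self-contained combinatorial induction above is preferable.
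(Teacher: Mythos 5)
Your proof is correct, and I can verify each load-bearing step: the recursion $I(G,x)=I(G-v,x)-x\,I(G-N[v],x)$ is right in this sign convention; the induction is well-founded, since both pairs to which you apply the hypothesis, $(G-v,\,G-N[v])$ and $(G-v,\,H)$, live on fewer vertices; the geometric expansion of $1/(1-xW)$ is legitimate as a formal power series (because $xW$ has zero constant term) and the $t=k$ summand contributes exactly $[x^0]W^k=W(0)^k=1$, with all other summands nonnegative; and the final convolution bound $\sum_{i}p_iq_{k-i}\ge p_0q_k\ge 1$ closes the loop, with integrality automatic throughout. The main thing to point out is that there is no in-paper proof to compare you against: Theorem~\ref{th:betamoment} is quoted verbatim from \cite{Csikvari2012} as one of the imported ingredients (it is used only to deduce the last corollary, on $h^*(H,x)/h^*(G,x)$), and the paper proves nothing about it. Measured against the statement as the paper uses it, your argument in fact delivers slightly more: connectivity of $G$ is never invoked, the conclusion holds for every graph $G$ and every \emph{proper} induced subgraph $H$ with the sharper bound $r_k(G,H)\ge 1$, and you correctly flag the degenerate reading $H=G$, where the stated positivity cannot hold literally (there $r_0=1$ and $r_k=0$ for $k\ge 1$), so that properness is the honest hypothesis. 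Your closing remark is also sound: the analytic route through Theorems~\ref{th:existbeta} and~\ref{th:betamonoton} (a simple, uncancelled dominant pole at $\beta(G)$, since $\beta(H)>\beta(G)$) only forces $r_k>0$ for all sufficiently large $k$, so the self-contained combinatorial induction is indeed the right tool for positivity at every $k$.
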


We will also need some results on a modified version of the matching polynomial. Let 
\[
  M(G,x)=\sum_{k\ge 0}(-1)^km_k(G)x^{n-k},
\] where $m_k(G)$ denotes the number of matchings with $k$ edges in  $G$. Note that
\[
  M(G,x)=x^nI(L(G),1/x).
\]
The following theorem is due to O. Heilmann and E. Lieb. 
\begin{Th}\cite{hei}\label{th:matching}
  All zeros of $M(G,x)$ are real, and the largest zero $t(G)$ is at most $4(\Delta-1)$, where $\Delta$ is the largest real root.
\end{Th}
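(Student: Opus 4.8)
The plan is to reduce the statement to the classical Heilmann--Lieb theorem for the ordinary matching polynomial and then prove the latter by C.~D.~Godsil's path-tree method; I emphasize that the independence-polynomial results collected above cannot by themselves yield real-rootedness, since independence polynomials need not have only real zeros, so the special matching structure must be exploited. Write $\mu(G,x)=\sum_{k\ge 0}(-1)^km_k(G)x^{n-2k}$ for the \emph{classical} matching polynomial. A direct comparison of coefficients gives the substitution identity
\[
  M(G,s^2)=s^{n}\mu(G,s),
\]
so that the zeros of $M(G,x)$ are exactly the squares of the zeros of $\mu(G,x)$. Hence it suffices to prove that $\mu(G,x)$ has only real zeros, all lying in $[-2\sqrt{\Delta-1},\,2\sqrt{\Delta-1}]$: squaring then makes every zero of $M$ real and nonnegative, and bounds the largest one by $\left(2\sqrt{\Delta-1}\right)^2=4(\Delta-1)$, which is precisely the asserted estimate.

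First I would record the deletion recurrence
\[
  \mu(G,x)=x\,\mu(G-v,x)-\sum_{w\in N(v)}\mu(G-v-w,x),
\]
obtained by sorting matchings according to whether the vertex $v$ is uncovered or matched to a neighbour $w$. Next I would introduce the path tree $T=T(G,u)$ rooted at a fixed vertex $u$, whose vertices are the simple paths of $G$ starting at $u$ and in which two paths are adjacent when one extends the other by a single vertex. Its key property is that removing the root decomposes $T$ into the path trees $T(G-u,w)$ over the neighbours $w$ of $u$, which lets one mirror the recurrence on the tree side and establish Godsil's identity
\[
  \frac{\mu(G-u,x)}{\mu(G,x)}=\frac{\mu(T-u,x)}{\mu(T,x)}
\]
by induction on $|V(G)|$, and from it the divisibility $\mu(G,x)\mid\mu(T,x)$.

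The point of passing to $T$ is that a tree has no cycles, so every nonvanishing term in the permutation expansion of $\det(xI-A(T))$ arises from a matching; thus $\mu(T,x)=\det(xI-A(T))$ is the characteristic polynomial of the symmetric matrix $A(T)$ and has only real zeros, whence the divisor $\mu(G,x)$ has only real zeros as well. For the bound I would note that a vertex of $T$ associated with a path ending at $w$ has degree at most $d(w)\le\Delta$, so $T$ has maximum degree at most $\Delta$; since a tree of maximum degree $\Delta$ embeds in the infinite $\Delta$-regular tree, whose adjacency operator has norm $2\sqrt{\Delta-1}$, the eigenvalues of $A(T)$ lie in $[-2\sqrt{\Delta-1},\,2\sqrt{\Delta-1}]$. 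This gives the required interval for the zeros of $\mu(G,x)$, and after squaring the bound $t(G)\le 4(\Delta-1)$ for $M(G,x)$.

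The hard part will be the middle step: proving Godsil's rational identity and extracting the divisibility cleanly, because the induction must match the decomposition of $T$ under root-deletion term-by-term against the vertex recurrence and keep track of which factor survives. An alternative that sidesteps path trees is the original Heilmann--Lieb induction, proving simultaneously that $\mu(G,x)$ is real-rooted and that $\mu(G-v,x)$ interlaces it; there the delicate issue is accommodating the minus signs of the recurrence within the interlacing calculus, using that real-rooted polynomials sharing a common interlacer remain real-rooted under the relevant combinations.
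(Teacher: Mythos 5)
The paper does not actually prove this statement: it is quoted verbatim from Heilmann and Lieb \cite{hei} and used downstream as a black box (via $M(G,x)=x^nI(L(G),1/x)$ and $\beta(L(G))=t(G)^{-1}$). So your proposal cannot be compared with an internal proof; what you have written is a self-contained proof of the cited theorem, and it is essentially correct. The reduction $M(G,s^2)=s^n\mu(G,s)$ is the right bridge between the paper's normalization $M(G,x)=\sum_k(-1)^km_k(G)x^{n-k}$ and the classical matching polynomial $\mu(G,x)=\sum_k(-1)^km_k(G)x^{n-2k}$ (both sides equal $\sum_k(-1)^km_k(G)s^{2n-2k}$); one pedantic correction is that if $G$ has a perfect matching then $0$ is a zero of $M$ but not the square of a zero of $\mu$ --- writing $\mu(G,s)=s^{n-2\nu}\prod_j(s^2-\theta_j^2)$ gives $M(G,x)=x^{n-\nu}\prod_j\left(x-\theta_j^2\right)$, which is what you actually need, and it yields real nonnegative zeros bounded by $4(\Delta-1)$ once $|\theta_j|\le 2\sqrt{\Delta-1}$. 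The core of your argument is Godsil's path-tree proof: the vertex recurrence, the identity $\mu(G-u,x)/\mu(G,x)=\mu(T-u,x)/\mu(T,x)$, the divisibility $\mu(G,x)\mid\mu(T,x)$, the fact that for a forest $\mu$ is the characteristic polynomial of the symmetric matrix $A(T)$, and the observation that a path ending at $w$ has one parent and at most $d(w)-1$ extensions, so $T$ has maximum degree at most $\Delta$ and its spectrum lies within the norm $2\sqrt{\Delta-1}$ of the infinite $\Delta$-regular tree. All of these steps are sound, and you correctly flag Godsil's identity as the place where real inductive work happens. This route is genuinely different from Heilmann and Lieb's original argument (your closing alternative), which runs a simultaneous induction showing $\mu(G-v,x)$ interlaces $\mu(G,x)$; the interlacing proof stays within finite-dimensional arguments, while Godsil's method buys the stronger structural fact that $\mu(G,x)$ divides the characteristic polynomial of a tree.

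Two caveats, both inherited from the statement rather than introduced by you. First, the theorem as printed contains a typo: $\Delta$ is the largest \emph{degree} of $G$, not ``the largest real root''; you interpreted it correctly, but a referee should fix the statement. Second, the bound needs $\Delta\ge 2$: for a single edge, $M(G,x)=x^2-x$ has largest zero $1>0=4(\Delta-1)$, and correspondingly your appeal to the norm $2\sqrt{\Delta-1}$ of the infinite $\Delta$-regular tree is also only valid for $\Delta\ge2$. Heilmann and Lieb assume this explicitly; one sentence disposing of graphs with $\Delta\le 1$ would make your proof airtight against the edge case.
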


Now we will present our new proofs for various results of R. Liu and P. Csikv\'ari.

\begin{Cor}\cite{Zhao2005,Csikvari2012a}\label{C:existgamma}
 Let $G$ be a connected graph. Then  $h(G,x)$ has a real zero, and  let $\gamma(G)$ be the largest  among them. Then $\gamma(G)$ is a simple zero of $h(G,x)$, and if $\xi\neq\gamma(G)$ is a zero of $h(G,x)$, then $\gamma(G)>|\xi|$.
\end{Cor}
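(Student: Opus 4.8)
The plan is to deduce the corollary directly from Theorem~\ref{th:existbeta} applied to the auxiliary graph $\widehat{G}$, transporting the conclusion across the identity $h(G,x)=x^{n}I(\widehat{G},1/x)$ of Theorem~\ref{th:main}. First I would record how the zeros of the two polynomials correspond. Writing $\alpha=\alpha(\widehat{G})$ for the independence number, the identity gives $h(G,x)=\sum_{k=0}^{\alpha}(-1)^{k}i_{k}(\widehat{G})x^{n-k}$, so $x^{n-\alpha}$ is the exact power of $x$ dividing $h(G,x)$, and the cofactor $q(x)=h(G,x)/x^{n-\alpha}$ is the reversal of $I(\widehat{G},x)$ (both have degree $\alpha$, and $I(\widehat{G},0)=1\neq 0$, so $I(\widehat{G},x)$ has no zero at the origin). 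Hence $\xi\mapsto 1/\xi$ is a multiplicity-preserving bijection between the nonzero zeros of $h(G,x)$ and the zeros of $I(\widehat{G},x)$, while the only remaining zero of $h(G,x)$ is $0$, of multiplicity $n-\alpha$.

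The essential step, and the one I expect to be the main obstacle, is to verify that $\widehat{G}$ is connected whenever $G$ is. Here I would use that $\widehat{G}$ is a spanning subgraph of the line graph $L(G)$, which is connected because $G$ is. Unpacking the adjacency rule shows that two edges of $G$ sharing a vertex are already adjacent in $\widehat{G}$ unless they share only their larger-indexed endpoint $u_{j}$ and their smaller endpoints $u_{i},u_{k}$ satisfy $u_{i}u_{k}\in E(G)$. In that exceptional case the edge $g=(u_{i},u_{k})$ lies in $G$, and one checks from the rule that $g$ is adjacent in $\widehat{G}$ to both $(u_{i},u_{j})$ and $(u_{k},u_{j})$, since in each pair the shared vertex is the smaller-indexed endpoint of one of the two edges. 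Thus any two neighbours in $L(G)$ are joined by a path of length at most $2$ in $\widehat{G}$, so the connectivity of $L(G)$ passes to $\widehat{G}$.

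With $\widehat{G}$ connected, Theorem~\ref{th:existbeta} supplies a simple zero $\beta=\beta(\widehat{G})\in(0,1]$ of $I(\widehat{G},x)$ that is strictly smallest in modulus among all its zeros. Transporting this through the reciprocal bijection, I would set $\gamma(G)=1/\beta$: it is real with $\gamma(G)\ge 1$, it is a simple zero of $h(G,x)$ because multiplicities are preserved, and it is strictly largest in modulus among the nonzero zeros of $h(G,x)$. Finally, since the remaining zero $0$ has modulus $0<\gamma(G)$ it does not compete, so $\gamma(G)$ is both the largest real zero and the unique zero of maximal modulus, which is exactly the assertion. The only routine matters left are the degree bookkeeping of the first paragraph and dispatching the trivial case where $G$ has no edge (a connected graph on one vertex).
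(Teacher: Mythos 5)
Your proof is correct and follows essentially the same route as the paper's: construct $\widehat{G}$, observe it is connected, apply Theorem~\ref{th:existbeta} to $I(\widehat{G},x)=h^*(G,x)$, and transport the conclusion through the reciprocal correspondence between zeros of $h(G,x)$ and of $I(\widehat{G},x)$. The only difference is one of detail: where the paper dismisses the connectivity of $\widehat{G}$ as clear from the construction, you actually prove it (correctly) by showing that any two edges of $G$ adjacent in $L(G)$ but not in $\widehat{G}$ are joined in $\widehat{G}$ by a path of length two through the edge $(u_i,u_k)$, and your bookkeeping of the zero at the origin and of multiplicities is likewise a sound filling-in of what the paper leaves implicit.
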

\begin{proof}
 Choose an ordering of the vertices of $G$, and construct $\widehat{G}$. From the construction of $\widehat{G}$ it is clear that $\widehat{G}$ is a connected graph, so $\beta(\widehat{G})>0$ is a simple simple of $I(\widehat{G},x)=h^*(G,x)$, thus, $\beta(\widehat{G})=\gamma(G)^{-1}$. The rest is the consequence of Theorem~\ref{th:existbeta}.
\end{proof}

%
%


\begin{Cor}\cite{Csikvari2012a}
 Let $G$ be a connected graph.
 Then $\gamma(G)\le t(G)$. In particular  $\gamma(G)\le 4(\Delta-1)$.
\end{Cor}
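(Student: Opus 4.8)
The plan is to transport the inequality $\gamma(G)\le t(G)$ into a statement about the smallest positive zeros of independence polynomials and then apply the monotonicity of $\beta$ under passing to subgraphs (Theorem~\ref{th:betamonoton}). First I would record two reciprocal dictionaries. Exactly as in the proof of Corollary~\ref{C:existgamma}, the construction of Section~\ref{sec:cons} gives $I(\widehat{G},x)=h^*(G,x)$ with $\widehat{G}$ connected, whence $\beta(\widehat{G})=\gamma(G)^{-1}$. In the same spirit, the identity $M(G,x)=x^nI(L(G),1/x)$ exhibits the nonzero zeros of $M(G,x)$ as the reciprocals of the zeros of $I(L(G),\cdot)$; since $L(G)$ is connected, Theorem~\ref{th:existbeta} identifies $\beta(L(G))$ as the zero of smallest modulus, so its reciprocal is the zero of largest modulus of $M(G,x)$, which is precisely $t(G)$. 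Thus $t(G)=\beta(L(G))^{-1}$.

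Next I would use that, by the construction, $\widehat{G}$ is a spanning subgraph of $L(G)$. As $G$ is connected and has an edge, $L(G)$ is connected, so Theorem~\ref{th:betamonoton} applies to the pair $(\widehat{G},L(G))$: if $\widehat{G}$ is a proper subgraph of $L(G)$ then $\beta(\widehat{G})>\beta(L(G))$, and otherwise $\widehat{G}=L(G)$ and the two coincide. Either way $\beta(\widehat{G})\ge\beta(L(G))>0$. Inverting (both numbers are positive) gives
\[
\gamma(G)=\beta(\widehat{G})^{-1}\le\beta(L(G))^{-1}=t(G),
\]
which is the first assertion; the bound $\gamma(G)\le t(G)\le 4(\Delta-1)$ then follows at once from Theorem~\ref{th:matching}.

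The routine steps here are the two dictionaries of the first paragraph, but the one point that deserves care is verifying that the largest real zero of $M(G,x)$ is exactly $\beta(L(G))^{-1}$ rather than the reciprocal of some other zero. This is precisely what the ``smallest modulus'' part of Theorem~\ref{th:existbeta} buys us, together with the Heilmann--Lieb guarantee (Theorem~\ref{th:matching}) that all zeros of $M(G,x)$ are real, so that ``largest real zero'' and ``largest modulus zero'' agree. A harmless side condition is that $L(G)$ be connected; this holds whenever $G$ is connected with at least one edge, and the degenerate single-vertex case can be treated by inspection.
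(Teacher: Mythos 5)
Your proof is correct and takes essentially the same route as the paper: both arguments rest on the inclusion $\widehat{G}\subseteq L(G)$, the monotonicity of $\beta$ from Theorem~\ref{th:betamonoton}, the two reciprocal identities $h^*(G,x)=x^nI(\widehat{G},1/x)$ and $M(G,x)=x^nI(L(G),1/x)$, and then Theorem~\ref{th:matching} for the bound $4(\Delta-1)$. The only difference is that you spell out details the paper leaves implicit, namely the verification that $t(G)=\beta(L(G))^{-1}$ (using that all zeros of $M(G,x)$ are real) and the degenerate case $\widehat{G}=L(G)$, where Theorem~\ref{th:betamonoton} does not literally apply but equality holds trivially.
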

\begin{proof}
 The first inequality follows from the fact that $\widehat{G}\subseteq L(G)$. Indeed this implies that $$\gamma(G)^{-1}=\beta(\widehat{G})\ge \beta(L(G))=t(G)^{-1},$$ where the inequality follows from Theorem~\ref{th:betamonoton}, and the equalities follow from the identities $h^*(G,x)=x^nI(\widehat{G},1/x)$ and $M(G,x)=x^nI(L(G),1/x)$. The second inequality follows from the first inequality and Theorem~\ref{th:matching}.
\end{proof}

\begin{Cor}\cite{Csikvari2012a}
 Let $H$ be a proper subgraph of $G$. Then $\gamma(H)< \gamma(G)$.
\end{Cor}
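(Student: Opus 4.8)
The plan is to transfer the desired strict inequality to the independence-polynomial side and invoke the monotonicity of $\beta$ from Theorem~\ref{th:betamonoton}. Recall from the proof of Corollary~\ref{C:existgamma} that $\gamma(G)^{-1}=\beta(\widehat{G})$ for every connected graph, and that $\beta(\widehat{G})$ does not depend on the chosen ordering of $V(G)$, since $I(\widehat{G},x)=h^*(G,x)$ is an ordering-independent polynomial by Proposition~\ref{Th:main}. Thus it suffices to produce the reversed inequality $\beta(\widehat{H})>\beta(\widehat{G})$, which by Theorem~\ref{th:betamonoton} will follow once we realize $\widehat{H}$ as a proper subgraph of the connected graph $\widehat{G}$, possibly through a chain of such comparisons. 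I would first reduce to $H$ connected: writing $H=H_1\cup\dots\cup H_r$ for the connected components, the multiplicativity $h(H,x)=\prod_i h(H_i,x)$ gives $\gamma(H)=\max_i\gamma(H_i)$, and each $H_i$ is a connected proper subgraph of $G$, so it is enough to prove $\gamma(H_i)<\gamma(G)$.

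The core step is to compare $\widehat{H}$ and $\widehat{G}$, and two special cases are transparent. If $H=G-e$ for a single edge $e$, choose the ordering so that the endpoints of $e$ are $u_{n-1},u_n$; then Proposition~\ref{Th:main} gives $\widehat{G-e}\subseteq\widehat{G}$ as a proper subgraph. If $H=G-v$ for a single vertex $v$, choose the ordering with $v=u_n$; a direct check of the adjacency rule shows that $\widehat{G-v}$ is exactly the subgraph of $\widehat{G}$ induced on those edges not incident to $v$, hence again proper. In both cases Theorem~\ref{th:betamonoton}, applied to the connected graph $\widehat{G}$, yields $\beta(\widehat{H})>\beta(\widehat{G})$ and so $\gamma(H)<\gamma(G)$.

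For a general connected proper subgraph $H$ I would pass from $H$ up to $G$ through a chain of connected graphs, each obtained from the previous by adding a single edge or a single pendant vertex. Such a chain exists because $G$ is connected: one absorbs the missing vertices $V(G)\setminus V(H)$ one at a time, each joined by one edge to the part already built (possible by growing a connected set inside $G$), and then adds the remaining edges one at a time, every intermediate graph staying connected. Reading the chain from $G$ down to $H$ as successive single edge and vertex deletions, and re-choosing an optimal ordering at each link, the two special cases above apply to every step; since each intermediate graph is connected, Theorem~\ref{th:betamonoton} is applicable throughout, so $\beta$ strictly increases and $\gamma$ strictly decreases along the chain, giving $\gamma(H)<\gamma(G)$.

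The main obstacle is exactly why one cannot compare $\widehat{H}$ and $\widehat{G}$ in a single ordering: in the adjacency rule the third condition ``$(u_i,u_k)\notin E(\cdot)$'' reverses under edge deletion, so deleting an arbitrary edge can \emph{create} new edges in the construction, and $\widehat{H}\subseteq\widehat{G}$ fails for a general spanning subgraph. This is what forces the edge-by-edge reduction, and the delicate point is to arrange the chain so that every graph occurring in it is connected, which is precisely what the build-up order guarantees.
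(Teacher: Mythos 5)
Your proof is correct, and while it rests on the same two pillars as the paper's own argument --- the containment $\widehat{G-e}\subseteq\widehat{G}$ from Proposition~\ref{Th:main} and the strict monotonicity of $\beta$ from Theorem~\ref{th:betamonoton} --- your decomposition of the deletion process is genuinely different, and in one respect more careful. The paper deletes all the edges $e_1,\dots,e_k$ first and only afterwards removes the (now isolated) vertices, disposing of those with the identity $h(H\cup K_1,x)=(-x)h(H,x)$; it asserts the strict inequality $\gamma(G_{i+1})<\gamma(G_i)$ at every edge-deletion step, even though the intermediate graphs $G_i$ may be disconnected, in which case $\widehat{G_i}$ is disconnected as well (it is a spanning subgraph of $L(G_i)$) and Theorem~\ref{th:betamonoton} does not literally apply; indeed strictness can genuinely fail there, e.g.\ deleting the edge of the $K_2$ in $K_3\cup K_2$ leaves $\gamma$ unchanged. (The paper's proof is repairable: the first step is strict because $G_0=G$ is connected, and the later steps are at least non-strict by componentwise monotonicity, which suffices for the conclusion --- but the paper does not say this.) Your connectivity-preserving chain --- absorb the vertices of $V(G)\setminus V(H)$ one at a time as pendant vertices, then add the missing edges --- sidesteps this issue entirely, at the price of two extra ingredients the paper does not need, both of which you justify soundly: the reduction to connected $H$ via the multiplicativity $h(H,x)=\prod_i h(H_i,x)$, which is immediate from the definition of clique covers, and the structural treatment of pendant-vertex deletion, where your claim that for $v=u_n$ the graph $\widehat{G-v}$ is the induced subgraph of $\widehat{G}$ on the edges avoiding $v$ is correct because the third adjacency condition only ever inspects pairs of indices strictly below the common larger index $j=l$, hence never a pair involving $n$. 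One pedantic point, common to your proof and the paper's: at the bottom of a chain ending in $K_1$ (or for an isolated-vertex component $H_i$), the graph $\widehat{K_1}$ is empty and $\beta$ is undefined, so that final comparison should be made directly ($\gamma(K_1)=0$, while $\gamma>0$ for any connected graph containing an edge) rather than through Theorem~\ref{th:betamonoton}.
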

\begin{proof}
 Suppose that $H$ can be obtained from $G$ by deleting the edges $\{e_1,\dots,e_k\}$ and then deleting the isolated vertices $\{v_1,\dots,v_l\}$. Let $G_i=G-\{e_1,\dots,e_i\}$ for $1\le i\le k$, and $G_0=G$. Since $G_i$ and $G_{i+1}$ differ only in one edge, then we can choose an ordering of the vertices of $G_i$, such that that $e_i$ is the edge between the last two vertices. Then $\widehat{G_{i+1}}=\widehat{G_i-e}$. This implies that   $\gamma(G_{i+1})<\gamma(G_{i})$ and so
 $\gamma(G_k)< \gamma(G_{k-1})<  \dots \gamma(G_0)=\gamma(G)$.
 
 Since $h(H\cup K_1,x)=(-x)h(H,x)$ and $\gamma(H)>0$, we have that $\gamma(H\cup K_1)=\gamma(H)$. So $\gamma(G_k)=\gamma(G_k-v_1)=\dots=\gamma(G_k-\{v_1,\dots,v_l\})=\gamma(H)$.
\end{proof}

\begin{Cor}\cite{Csikvari2012a}
  Let $G$ be a connected graph and $H$ be a subgraph, then in the following series
  \[\frac{h^*(H,x)}{h^*(G,x)}=\sum_{k\ge 0} s_{k}(G,H) x^k\]
  for each $k\ge 0$ the coefficients $s_k(G,H)$ are positive integers.
\end{Cor}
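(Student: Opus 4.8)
The plan is to transport the statement to the independence-polynomial side through the identity $h^*(\cdot,x)=I(\widehat{(\cdot)},x)$ of Proposition~\ref{Th:main} and then feed it into the ratio result Theorem~\ref{th:betamoment}. Since $h^*(G,0)=a_n(G)=1$ for every graph, the quotient $h^*(H,x)/h^*(G,x)$ is automatically a well-defined formal power series with constant term $s_0(G,H)=1$, so the whole content is positivity and integrality of the remaining coefficients. The first reduction strips off the deleted vertices: if $v$ is an isolated vertex of a graph $F$, then every clique cover of $F$ uses $\{v\}$ as a singleton part, so $a_k(F)=a_{k-1}(F-v)$ and a short bookkeeping gives $h^*(F,x)=h^*(F-v,x)$. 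Hence deleting isolated vertices leaves $h^*$ unchanged. Writing any subgraph $H\subseteq G$ as the result of first deleting the edges in $E(G)\setminus E(H)$ and then deleting the now-isolated vertices of $V(G)\setminus V(H)$, I may therefore assume $H=G-\{e_1,\dots,e_m\}$ is obtained purely by edge deletions.

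Next I telescope. With $G_0=G$ and $G_i=G_{i-1}-e_i$ I have
\[
\frac{h^*(H,x)}{h^*(G,x)}=\prod_{i=1}^{m}\frac{h^*(G_i,x)}{h^*(G_{i-1},x)} .
\]
A product of formal power series each of which has constant term $1$ and positive integer coefficients again has constant term $1$ and positive integer coefficients, because every coefficient of the product is a finite sum of products of positive integers. It therefore suffices to prove the single-edge case: that $h^*(F-e,x)/h^*(F,x)$ has positive integer coefficients for every graph $F$ and every $e\in E(F)$.

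For the single-edge step I pass to a connected host. Since $h^*$ is multiplicative over disjoint unions (a clique cover of a disjoint union is a pair of clique covers), if $C$ is the connected component of $F$ containing $e$, the other components cancel and $h^*(F-e,x)/h^*(F,x)=h^*(C-e,x)/h^*(C,x)$. I then order $V(C)$ so that $e$ joins the two largest-indexed vertices; as noted in the proof of Corollary~\ref{C:existgamma}, $\widehat{C}$ is connected. The key refinement is that $\widehat{C-e}$ is not just a subgraph of $\widehat{C}$, as in Proposition~\ref{Th:main}, but an \emph{induced} one: the two graphs share the vertex set $E(C)\setminus\{e\}$ and have identical adjacency rules, the only dependence on the edge set being the clause ``$(u_i,u_k)\notin E(\cdot)$'' of the third adjacency condition; but that clause is tested only for pairs sharing their maximal endpoint, so $u_i$ and $u_k$ both have index strictly below that common maximum and can never be the pair $\{u_{N-1},u_N\}$ defining $e$. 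Thus membership of $e$ is irrelevant to all remaining adjacencies and $\widehat{C-e}=\widehat{C}[E(C)\setminus\{e\}]$. Applying Theorem~\ref{th:betamoment} to the connected graph $\widehat{C}$ and its induced subgraph $\widehat{C-e}$ yields that $h^*(C-e,x)/h^*(C,x)=I(\widehat{C-e},x)/I(\widehat{C},x)$ has positive integer coefficients, which finishes the single-edge case and hence the proof.

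The main obstacle is the connectivity bookkeeping: Theorem~\ref{th:betamoment} insists that the larger graph be connected, whereas the intermediate graphs $G_i$ arising from successive edge deletions are usually disconnected, so the theorem cannot be invoked on them directly. This is exactly why the single-edge estimate is routed through the connected component $C$ together with the multiplicativity of $h^*$. A secondary point demanding care is the upgrade of Proposition~\ref{Th:main}'s ``$\widehat{C-e}\subseteq\widehat{C}$'' to an induced-subgraph statement, which is what Theorem~\ref{th:betamoment} actually needs; the choice of $e$ as the top edge in the ordering is precisely what makes this upgrade go through.
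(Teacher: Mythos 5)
Your proof is correct, and it follows the route the paper intends: transport $h^*$ to independence polynomials via Proposition~\ref{Th:main} and invoke Theorem~\ref{th:betamoment}. The difference is that the paper's own proof consists of the single sentence ``Direct consequence of Theorem~\ref{th:betamoment},'' while you supply the reduction that this one-liner actually hides --- and that reduction is genuinely needed, not mere pedantry. Under the natural identification, $\widehat{H}$ need \emph{not} be an induced subgraph (or even a subgraph) of $\widehat{G}$ once more than one edge is deleted: for $G=K_3$ with ordering $u_1,u_2,u_3$ and $H$ obtained by deleting $(u_1,u_2)$, the vertices $(u_1,u_3)$ and $(u_2,u_3)$ are adjacent in $\widehat{H}$ but non-adjacent in $\widehat{G}$. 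So one cannot apply Theorem~\ref{th:betamoment} to $\widehat{H}\subseteq\widehat{G}$ in one stroke; your scheme --- delete edges one at a time with a re-chosen ordering placing the deleted edge on the two top-indexed vertices, upgrade Proposition~\ref{Th:main}'s ``$\widehat{C-e}\subseteq\widehat{C}$'' to an induced-subgraph statement (your argument for this is exactly right: the clause $(u_i,u_k)\notin E(\cdot)$ is only ever tested on pairs below the shared maximum), telescope the ratios, and route each single-edge step through the connected component using multiplicativity of $h^*$ so that Theorem~\ref{th:betamoment}'s connectivity hypothesis is met --- is the correct way to make the corollary rigorous. The paper uses the same edge-by-edge re-ordering device in its proof of the preceding corollary on $\gamma$-monotonicity, so your write-up can fairly be read as the intended, but unwritten, proof; in that sense yours is more complete than the paper's.
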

\begin{proof}
 Direct consequence of Theorem~\ref{th:betamoment}.
\end{proof}

%
%

%
%

\bibliography{hivatkozat}
\bibliographystyle{plain}

\end{document}